\documentclass[11pt]{amsart}


\usepackage{amsmath}
\usepackage{amssymb}
\usepackage{amsfonts}
\usepackage{amsthm}
\usepackage{bbm}
\usepackage[T1]{fontenc}
\usepackage[usenames,dvipsnames]{xcolor}
\usepackage[colorlinks=true,linkcolor=NavyBlue,urlcolor=RoyalBlue,citecolor=PineGreen,bookmarks=true,bookmarksopen=true,bookmarksopenlevel=2,unicode=true,linktocpage]{hyperref}


\usepackage[a4paper,  left=25mm, right=25mm, top=33mm, bottom=33mm, marginparwidth= 20mm]{geometry}


\newtheorem{theorem}{Theorem}[section]

\newtheorem{proposition}[theorem]{Proposition}
\newtheorem{lemma}[theorem]{Lemma}


\newcommand{\1}{\mathbbm{1}}
\renewcommand{\C}{\mathbb{C}}
\newcommand{\cC}{\mathcal{C}}
\newcommand{\conf}{\mathsf{Conf}}
\newcommand{\CSX}{\cC(S\hspace{0.7pt};\hspace{-0.7pt}X)}
\renewcommand{\d}{{\; {\rm d}}}

\newcommand{\E}{\mathbb{E}}
\renewcommand{\epsilon}{\varepsilon}
\newcommand{\ext}{\mathchoice
             {\raisebox{1pt}{\text{$\textstyle\bigwedge$}}}             
             {\raisebox{1pt}{$\bigwedge$}}
             {\raisebox{0.5pt}{$\scriptstyle\bigwedge$}}
             {\raisebox{0.2pt}{$\scriptscriptstyle\bigwedge$}}}
\newcommand{\K}{\mathbb{K}}
\newcommand{\Part}{\mathcal P}
\newcommand{\proj}[1]{{\sf\Pi}^{#1}}
\newcommand{\R}{\mathbb{R}}
\newcommand{\rP}{\mathsf{P}}
\newcommand\sbinom[2]{\text{$\textstyle\binom{#1}{#2}$}}
\newcommand\set[1]{[\![#1]\!]}
\newcommand{\Vect}{{\mathrm{Vect}}}
\newcommand{\X}{{\mathsf X}}


\title[Mean projection for determinantal point processes]{On the mean projection theorem for\\determinantal point processes}

\author{Adrien Kassel}
\address{Adrien Kassel -- CNRS -- UMPA, ENS de Lyon}
\email{adrien.kassel@ens-lyon.fr}

\author{Thierry L\'evy}
\address{Thierry L\'evy -- LPSM, Sorbonne Universit\'e, Paris}
\email{thierry.levy@sorbonne-universite.fr}

\date{\today}

\keywords{determinantal point processes, random projection, exterior algebra.}

\subjclass[2010]{60G55, 15A75}


\begin{document}

\begin{abstract}In this short note, we extend to the continuous case a mean projection theorem for discrete determinantal point processes associated with a finite range projection, thus strengthening a known result in random linear algebra due to Ermakov and Zolotukhin. We also give a new formula for the variance of the exterior power of the random projection.
\end{abstract}

\maketitle


\section{Introduction}

Kirchhoff's work on electrical networks \cite{Kirchhoff} seems to be one of the earliest works in the literature where linear algebra and graph-theoretical combinatorial methods were put together. Later on, linear algebra problems, and classical determinantal methods for solving them, gave rise to various statistical approaches, notably linked to the so-called determinantal point processes (introduced by Macchi in 1975 \cite{Macchi}, and named like this by Borodin, only around 2000 which saw a blossoming of results on those processes from various authors, see \cite{Soshnikov, Shirai-Takahashi,Lyons-DPP, Johansson, Borodin}). These methods recently became an active field in randomized numerical linear algebra~\cite{Derezinski}.

 In his work, Kirchhoff solved a linear algebra system on an electrical network seen as a finite graph, by expressing the current induced by an external battery hooked on the network, as an average over spanning trees of a certain current associated to the tree. In modern terms, he expressed an orthogonal projection as the expectation of a certain random projection associated to a random spanning tree. 
Such a mean projection theorem appeared in several guises in the literature, and more or less independently, in works of Maurer \cite{Maurer}, Lyons \cite{Lyons-DPP}, Catanzaro--Chernyak--Klein \cite{CCK-line}, and probably others that we are unaware~of. 

In our work \cite[Theorem 5.9]{KL3}, we extended the mean projection formula for determinantal point processes on finite sets, thus putting the statements of \cite{Kirchhoff,Maurer,Lyons-DPP,CCK-line} in a unified geometric framework, and strengthening the result by proving a mean projection theorem for the exterior powers of the projections, that is, for minors of their matrices in a fixed basis. 

Let us quickly recall our statement. 
Let $\K$ be $\R$ or $\C$, let $E$ be a finite dimensional Euclidean space on $\K$ of dimension $d$, and let $(e_i)_{1\le i\le d}$ be an orthonormal basis of $E$. We let $S=\{1,\ldots, d\}$ and consider $H$ a subspace of $E$ of dimension $n$. Let $\X$ be the determinantal point process on $S$ associated to the matrix $K=\left(\langle e_i, \proj{H} e_j\rangle\right)_{1\le i,j\le d}$, where $\proj{H}$ is the orthogonal projection on $H$. For each $X\subseteq S$, let $E_X=\bigoplus_{x\in X} \K e_x$ be the corresponding coordinate subspace of $E$.

\begin{theorem}\label{thm:discrete}
Almost surely, the equality $E=H\oplus E_{\X}^\perp$ holds, and denoting by $\rP_{\X}$ the projection on $H$ parallel to $E_{\X}^\perp$, we have
\[\E\big[\ext \rP_{\X}\big]=\ext\proj{H}\,.\]
\end{theorem}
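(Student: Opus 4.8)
The plan is to treat the two assertions in turn, reducing the identity of expectations to a family of scalar identities by grading the exterior algebra.

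\emph{Reduction.} Since $K$ is the matrix of $\proj{H}$ in an orthonormal basis, it is the matrix of an orthogonal projection of rank $n$, so $\X$ is a projection determinantal process and almost surely $|\X|=n$. Fix an orthonormal basis $(f_j)_{1\le j\le n}$ of $H$ and set $V=(\langle e_i,f_j\rangle)_{1\le i\le d,\,1\le j\le n}$, so that $K=VV^*$ and $V^*V=I_n$; equivalently the rows $v_i\in\K^n$ of $V$ form a Parseval frame, $\sum_{i}v_iv_i^*=I_n$. For $|X|=n$ one has $\det K_{X,X}=|\det V_X|^2$, where $V_X$ is the square submatrix of rows indexed by $X$; hence $X$ belongs to the support of $\X$ iff $V_X$ is invertible, iff the $v_x$ $(x\in X)$ form a basis of $\K^n$, iff $\proj{E_X}$ restricts to an isomorphism $H\to E_X$, iff $H\cap E_X^\perp=\{0\}$. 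Since $\dim H+\dim E_X^\perp=d$, this yields $E=H\oplus E_X^\perp$ almost surely. Finally, writing $\ext=\bigoplus_k\bigwedge^k$ and using linearity of $\E$, it suffices to show $\E[\bigwedge^k\rP_{\X}]=\bigwedge^k\proj{H}$ for each $k$, and comparing matrix coefficients in the basis $(e_A)_{|A|=k}$ of $\bigwedge^k E$, it is enough to prove $\E[\det(\rP_{\X})_{A,B}]=\det K_{A,B}$ for all $A,B$ with $|A|=|B|=k$.

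\emph{Minors of the oblique projection.} On the support, $\rP_X=V V_X^{-1}J_X$, where $J_X$ restricts a vector to its coordinates in $X$: this operator fixes the columns of $V$, hence $H$, and annihilates $E_X^\perp$. Thus the columns of $\rP_X$ vanish off $X$, so $\det(\rP_X)_{A,B}=0$ unless $B\subseteq X$, while for $B\subseteq X$ a generalized Cramer rule (Jacobi's identity for minors) gives $\det(\rP_X)_{A,B}=\pm\det V_{(X\setminus B)\cup A}/\det V_X$. Multiplying by the weight $\det K_{X,X}=|\det V_X|^2$ and summing over $X\supseteq B$ with $|X|=n$, the denominators cancel; writing $X=B\sqcup X'$ and $\xi_{X'}=\bigwedge_{x\in X'}v_x\in\bigwedge^{n-k}\K^n$, the expectation becomes
\[\E\big[\det(\rP_{\X})_{A,B}\big]=\sum_{|X'|=n-k}\overline{[\,v_B\wedge\xi_{X'}\,]}\;[\,v_A\wedge\xi_{X'}\,],\]
where $v_A=\bigwedge_{a\in A}v_a$, $v_B=\bigwedge_{b\in B}v_b$, and $[\,\cdot\,]$ is the coordinate in $\bigwedge^n\K^n\cong\K$. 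On the other hand, a direct computation identifies $\det K_{A,B}=\langle v_B,v_A\rangle_{\bigwedge^k\K^n}$, the Gram determinant of the families $(v_a)_{a\in A}$ and $(v_b)_{b\in B}$.

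\emph{The key identity.} It remains to prove the exterior Parseval identity
\[\sum_{|X'|=n-k}\overline{[\,\beta\wedge\xi_{X'}\,]}\;[\,\alpha\wedge\xi_{X'}\,]=\langle\beta,\alpha\rangle_{\bigwedge^k\K^n}\qquad\big(\alpha,\beta\in\textstyle\bigwedge^k\K^n\big).\]
The idea is that the frame relation $\sum_i v_iv_i^*=I_n$ is inherited by the $(n-k)$-th exterior power: by Cauchy--Binet the family $(\xi_{X'})_{|X'|=n-k}$ is a Parseval frame of $\bigwedge^{n-k}\K^n$, so $\sum_{X'}\langle\xi_{X'},\phi\rangle\langle\psi,\xi_{X'}\rangle=\langle\psi,\phi\rangle$ there. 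Identifying the functional $\eta\mapsto[\,\alpha\wedge\eta\,]$ with $\langle c(\alpha),\eta\rangle$ for the (antilinear, isometric) Hodge map $c\colon\bigwedge^k\K^n\to\bigwedge^{n-k}\K^n$, the left-hand side equals $\langle c(\alpha),c(\beta)\rangle=\langle\beta,\alpha\rangle$, proving the identity and hence the theorem. The main obstacle is twofold: controlling the signs in the generalized Cramer rule so that the weighted minors reorganize into the manifestly Hodge-dual form displayed above, and establishing the exterior Parseval identity, which is the genuine engine of the proof; the a.s. decomposition and the grading reduction are routine by comparison.
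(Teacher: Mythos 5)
Your proposal is correct, and it is worth comparing with how the paper actually handles this statement: the paper does not reprove Theorem~\ref{thm:discrete} directly but derives its general form, Theorem~\ref{thm:projection-continuous}, whose proof specializes to the present case (take $S=\{1,\dots,d\}$ with counting measure, so that $\cC(S)=E$ and $\CSX=E_\X^\perp$). Both your argument and the paper's share the same skeleton: realize $\rP_X$ by solving a linear system whose matrix is the square submatrix of frame/evaluation data indexed by the configuration ($V_X$ for you, $M=(\varphi_j(x_i))$ for the paper); apply a generalized Cramer rule to write minors of $\rP_X$ as ratios of determinants (your Jacobi-type identity $\det(\rP_X)_{A,B}=\pm\det V_{(X\setminus B)\cup A}/\det V_X$ is the transpose of the paper's Proposition~\ref{prop:cramer}); and cancel the Cramer denominator against the determinantal weight $|\det V_X|^2=|\det M|^2$. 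The genuine divergence is in the final evaluation. The paper applies the Andreieff--Heine identity once and observes that the resulting Gram matrix $\det\big(\langle\varphi_a,\psi_{I,b}\rangle\big)$ is block-triangular with an identity diagonal block, so the answer drops out with no sign analysis whatsoever. You instead pass to $\bigwedge^{n-k}\K^n$, note via Cauchy--Binet that the wedges $\xi_{X'}$ inherit the Parseval-frame property, and conclude by Hodge duality; this ``exterior Parseval identity'' is an attractive statement in its own right and makes the kinship with randomized linear algebra visible, but it is also where your one loose end sits: the $\pm$ in the Cramer rule and the shuffle signs in $\det V_X=\pm[v_B\wedge\xi_{X'}]$ must be checked to cancel term by term, which you assert rather than verify (they do, by Laplace expansion along the rows of $X\setminus B$ of $VV_X^{-1}$, whose rows indexed by $X$ form the identity matrix). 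The other trade-off is scope: your frame-theoretic picture ($K=VV^*$, sums over subsets of $\set{d}$, the operator $VV_X^{-1}J_X$) is intrinsically tied to a finite ground set, whereas the paper's formulation in terms of functions evaluated at points transfers verbatim to a Polish space $S$, which is precisely the generalization the note is after.
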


In words, in a fixed basis of $E$, the expectation of any minor of the matrix of $\rP_{\X}$ is equal to the same minor of $\proj{H}$.

A short while ago, it came to our attention while reading the recent statistics paper \cite{Bardenet} on Monte--Carlo integration methods, that such a mean projection formula had also appeared in~\cite{EZ} in the case of $S=\R$, in a different guise, although the relation to the above-cited works was not mentioned there. 

One of the referees of this paper kindly pointed out to us that results in the spirit of Theorem~\ref{thm:discrete} have also been obtained in the context of the resolution of singular linear systems of equations, for instance in \cite{Berg,BenTal-Teboulle} and more recently in the context of active sampling for linear regression \cite[Thms 5, 6 and 7]{Derezinski-Warmuth}, see also \cite{Avron-Boutsidis,Zelda-Suvrit,Derezinski-Warmuth-Hsu}. In \cite[Def. 4]{Derezinski-Liang-Mahoney}, the authors define the class of random matrices for which the expectation of any minor equals the same minor of the expectation, give basic properties, and provide a few examples. Theorem~\ref{thm:discrete} and \cite[Thm 5.9]{KL3} give families of examples of such random matrices, namely the matrices~$\rP_{\X}$. A systematic study of this class of random matrices would certainly be interesting. \nolinebreak
\smallskip

The goal of this short note is to extend Theorem~\ref{thm:discrete} to the case of a determinantal point process associated to a finite rank orthogonal projection on any Polish space~$S$, so that it applies for instance to any orthogonal polynomial ensemble, see \cite[Section 3.8]{Lyons-ICM}. This extension is the content of Theorem~\ref{thm:projection-continuous}. An extension of Theorem~\ref{thm:discrete} to the case of a projection with infinite range (both in the case where~$S$ is discrete or continuous) would be interesting. An example of this situation is investigated in \cite{Bufetov-Qiu}, where the author study among other things the continuous analogue of $\rP_{\X}$ in the case of the Bergman kernel.


\section{The mean projection theorem}

Let $S$ be a Polish space and $\lambda$ a positive Radon measure on $S$. Let us consider the space $E=L^{2}(S,\lambda)$ and the space $\cC(S)$ of continuous functions on $S$.\footnote{The space of continuous functions plays for us the role usually devoted to a reproducing kernel Hilbert space (RKHS), namely that of a space of functions that can be evaluated at points. However, we do not need this extra structure, because we do not need evaluation at a point to be a continuous linear form. Moreover, it seems that in many examples of interest, the RKHS is a subspace of continuous functions, so that our result applies.} Let $H\subseteq E\cap \cC(S)$ be a linear subspace of finite dimension $n$.

Let $\conf_n(S)$ be the set of collections of $n$ distinct points in $S$, and let $\mu$ be the determinantal probability measure on $\conf_n(S)$ associated with the orthogonal projection on $H$. This means that if we choose an orthonormal basis $(\varphi_j)_{1\le j\le n}$ of $H$, then we have for any bounded continuous symmetric test function $T:S^{n}\to \C$ the equality
\begin{equation}\label{eq:densitydpp}
\int_{\conf_n(S)} T(X)\, \d \mu(X)=\frac{1}{n!}\int_{S^{n}} T(x_{1},\ldots,x_{n}) \vert \det \big(\varphi_j(x_i)_{1\leq i,j \leq n}\big)\vert^{2} \, \d\lambda^{\otimes n}(x_{1},\ldots,x_{n}),
\end{equation}
in which the right-hand side does not depend on the choice of the orthonormal basis.
We will denote by $\X$ a random subset of $S$ distributed according to $\mu$, and use the notation $\E[T(\X)]$ for either of the two sides of the equality above. 

It follows from \eqref{eq:densitydpp} that $\mu$-almost every $X$ is a {\em uniqueness set} for $H$, in the sense that two elements of $H$ that coincide on $X$ are equal.\footnote{The uniqueness property is true for all determinantal processes associated with an orthogonal projection of possibly infinite range, that is with infinitely many points ($n=\infty$), as proved in the discrete case by Lyons~\cite{Lyons-DPP}, and recently by Bufetov-Qiu-Shamov~\cite{Bufetov-Qiu-Shamov} in the general case, following partial results by Ghosh~\cite{Ghosh}.
} This fact can be used to define a random projection onto $H$, as follows. For every $X\in \conf_n(S)$, let us define $\CSX=\{f\in \cC(S) : f_{|X}=0\}$.

\begin{lemma}\label{lem:decomposition}
For $\mu$-almost every $X\in \conf_n(S)$, the decomposition $\cC(S)=H\oplus \CSX$ holds.
\end{lemma}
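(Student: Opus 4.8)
The plan is to reduce the claimed decomposition to a statement about a single finite-dimensional restriction map. Fix $X=\{x_1,\dots,x_n\}\in\conf_n(S)$ and consider the linear map $\rho_X\colon H\to \C^{X}$ sending $h$ to its restriction $h_{|X}$, where $\C^{X}$ denotes the $n$-dimensional space of all scalar functions on the $n$-point set $X$. I claim that proving $\rho_X$ to be a linear isomorphism for $\mu$-almost every $X$ immediately yields both that $H\cap\CSX=\{0\}$ and that $H+\CSX=\cC(S)$, which is exactly the asserted direct-sum decomposition.

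First I would treat injectivity, which is nothing but the uniqueness property recalled above: if $h\in H$ satisfies $\rho_X h=0$, then $h$ and the zero function are two elements of $H$ agreeing on $X$, hence $h=0$. Since this holds for $\mu$-almost every $X$ and since $\dim H=n=|X|=\dim\C^{X}$, a dimension count upgrades injectivity to bijectivity: $\rho_X$ is an isomorphism for $\mu$-almost every $X$.

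It then remains only to read off the decomposition from the isomorphism. The kernel of $\rho_X$ is precisely $H\cap\CSX$, so injectivity gives $H\cap\CSX=\{0\}$. For surjectivity: given any $g\in\cC(S)$, the function $g_{|X}$ lies in $\C^{X}$, so there exists $h\in H$ with $\rho_X h=g_{|X}$; then $g-h$ vanishes on $X$, i.e.\ $g-h\in\CSX$, and $g=h+(g-h)$ exhibits the required decomposition. Combining the two assertions yields $\cC(S)=H\oplus\CSX$ for $\mu$-almost every $X$.

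I do not expect a serious obstacle here, since all the analytic content has been front-loaded into \eqref{eq:densitydpp}. The only point requiring care is the identification of the $\mu$-negligible exceptional set: writing $\rho_X$ in the orthonormal basis $(\varphi_j)_{1\le j\le n}$ of $H$ and the evaluation basis of $\C^{X}$, its matrix is $\big(\varphi_j(x_i)\big)_{1\le i,j\le n}$, whose non-invertibility amounts to the vanishing of $\det\big(\varphi_j(x_i)\big)$. Since $\mu$ has density proportional to $|\det(\varphi_j(x_i))|^{2}$ with respect to $\lambda^{\otimes n}$ by \eqref{eq:densitydpp}, this determinant is nonzero $\mu$-almost surely, confirming that $\rho_X$ is invertible off a $\mu$-null set.
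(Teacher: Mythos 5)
Your proof is correct and is essentially the paper's argument: both rest on the fact that $\mu$-almost surely $\det\big(\varphi_j(x_i)\big)\neq 0$ (immediate from the density in \eqref{eq:densitydpp}), so that the evaluation map from $H$ to functions on $X$ is invertible, giving a unique element of $H$ agreeing with any given $f\in\cC(S)$ on $X$. Your phrasing via the restriction map $\rho_X$ is just a coordinate-free repackaging of the paper's linear system $\alpha_1\varphi_1(x_i)+\ldots+\alpha_n\varphi_n(x_i)=f(x_i)$.
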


\begin{proof} Let $f$ be an element of $\cC(S)$. Let $(\varphi_j)_{1\le j\le n}$ be an orthonormal basis of $H$. For $\mu$-almost every $X=\{x_{1},\ldots,x_{n}\}$ in $\conf_n(S)$, we have $\det (\varphi_j(x_i)_{1\leq i,j \leq n})\ne 0$, so that the system 
\[\alpha_{1}\varphi_{1}(x_{i})+\ldots + \alpha_{n}\varphi_{n}(x_{i})= f(x_i)\;, \quad \forall i\in \{1,\ldots, n\}\]
admits a unique solution. Then $\rP_X f=\alpha_{1}\varphi_{1}+\ldots + \alpha_{n}\varphi_{n}$
is the unique element of $H$ which takes the same values as $f$ on $X$. 
\end{proof}

For the rest of this note, we will keep the notation $\rP_{X}$ introduced in the previous proof for the projection on $H$ parallel to $\CSX$. Let us emphasize that the decomposition given by Lemma~\ref{lem:decomposition} depends on $H$ and $X$, but is independent of the Euclidean structure of $E$. In particular, the projection $\rP_{X}$ is independent of this Euclidean structure. 

For example, if $S=\R$, $\lambda$ is a measure with infinite support which admits moments of all orders, and $\varphi_{1},\ldots,\varphi_{n}$ are the first $n$ orthogonal polynomials with respect to $\lambda$, then $H$ is the space of polynomial functions of degree at most $n-1$ and $\rP_{X}f$ is the \emph{interpolating polynomial} of the restriction of $f$ to $X$.

For all $g_{1},\ldots,g_{m}\in E \cap \cC(S)$, let us define $g_{1}\wedge \ldots \wedge g_{m} \in L^{2}(S^{m},\tfrac{1}{m!}\lambda^{\otimes m})\cap \cC(S^{m})$ by setting, for all $y_{1},\ldots,y_{m}\in S$,
\begin{equation}\label{eq:defwedge}
(g_{1}\wedge \ldots \wedge g_{m})(y_{1},\ldots,y_{m})=\det\big(g_{j}(y_{i})_{1\leq i,j \leq m}\big).
\end{equation}
We will use several times the Andreieff--Heine identity, which is a continuous analogue of the Cauchy--Binet identity, and can be phrased as follows: if $h_{1},\ldots,h_{m}$ belong to $E\cap \cC(S)$, then 
\begin{equation}\label{eq:AH}
\langle g_{1}\wedge \ldots \wedge g_{m},h_{1}\wedge \ldots \wedge h_{m}\rangle_{L^{2}(S^{m},\tfrac{1}{m!}\lambda^{\otimes m})}=\det\big(\langle g_{i},h_{j}\rangle\big)_{1\leq i,j\leq m}.
\end{equation}
This equality justifies, for instance, the fact that the measure $\mu$ defined by \eqref{eq:densitydpp} is a probability measure.

Let us write $H^{0}=H$ and $H^{1}=H^{\perp}$. The isomorphism of vector spaces $L^{2}(S^{m},\frac{1}{m!}\lambda^{\otimes m})\simeq L^{2}(S,\lambda)^{\otimes m}$ is $\sqrt{m!}$ times an isometry, and the orthogonal decomposition $L^{2}(S)=H^{0}\oplus H^{1}$ gives rise to an orthogonal decomposition
\begin{equation}\label{eq:decL2}
L^{2}(S^{m})\simeq L^{2}(S)^{\otimes m}=\bigoplus_{\epsilon_{1},\ldots,\epsilon_{m}\in \{0,1\}}\!\! H^{\epsilon_{1}}\otimes \ldots \otimes H^{\epsilon_{m}}=\bigoplus_{k=0}^{m} \bigg[\bigoplus_{\substack{\epsilon_{1},\ldots,\epsilon_{m}\in \{0,1\}\\ \epsilon_{1}+\ldots+\epsilon_{m}=k}} \!\! H^{\epsilon_{1}}\otimes \ldots \otimes H^{\epsilon_{m}}\bigg].
\end{equation}
Let us denote by ${\sf\Pi}_{k}$ the orthogonal projection of $L^{2}(S^{m})$ on the $k$-th summand of the last expression. In order to describe this operator more concretely, recall that we denote by $\proj{H}$ the orthogonal projection on $H$ in $E$. For all real $t$, let us define the linear operator ${\sf D}_{t}=\proj{H}+t\proj{H^{\perp}}$ on $E$. Then 
\[{\sf D}_{t} g_{1}\wedge \ldots \wedge {\sf D}_{t} g_{m}=\sum_{k=0}^{m} t^{k}\, {\sf \Pi}_{k}(g_{1}\wedge \ldots \wedge g_{m}).\]
In words, ${\sf \Pi}_{k}(g_{1}\wedge \ldots \wedge g_{m})$ is the sum of all the functions obtained from $g_{1}\wedge \ldots \wedge g_{m}$ by replacing~$k$ of the $g_{i}$'s by their projections on $H^{\perp}$, and the others by their projection on~$H$.

\begin{theorem}\label{thm:projection-continuous}
For all $m\ge 1$, and all $f_1, \ldots, f_m\in E\cap \cC(S)$, we have
\begin{align}\label{eq:first-moment}
\E[\rP_{\X} f_1 \wedge \ldots \wedge \rP_{\X} f_m ]&= \proj{H} f_1\wedge \ldots \wedge \proj{H} f_m\, ,\\
\label{eq:second-moment}
{\rm Var}(\rP_{\X} f_1 \wedge \ldots \wedge \rP_{\X} f_m)&=\sum_{k=1}^{m} \sbinom{n-m+k}{k} \|{\sf \Pi}_{k}(f_{1}\wedge \ldots \wedge f_{m})\|^{2}.
\end{align}
\end{theorem}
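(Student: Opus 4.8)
The plan is to reduce both formulas to the computation of the coordinates of the random element $Z:=\rP_{\X}f_1\wedge\ldots\wedge\rP_{\X}f_m$ in a fixed orthonormal basis of $\ext^m H$. Fix an orthonormal basis $(\varphi_j)_{1\le j\le n}$ of $H$; by the Andreieff--Heine identity \eqref{eq:AH} the family $(\varphi_P:=\varphi_{p_1}\wedge\ldots\wedge\varphi_{p_m})$, indexed by $P=\{p_1<\ldots<p_m\}\subseteq\{1,\ldots,n\}$, is orthonormal in $L^2(S^m,\tfrac1{m!}\lambda^{\otimes m})$ and spans $\ext^m H$. Since $\rP_X f_i\in H$, both $Z$ and the target $\proj{H}f_1\wedge\ldots\wedge\proj{H}f_m$ lie in $\ext^m H$, so everything is governed by the random coordinates $c_P(X):=\langle \rP_X f_1\wedge\ldots\wedge\rP_X f_m,\varphi_P\rangle$: identity \eqref{eq:first-moment} amounts to $\E[c_P]=\langle\proj{H}f_1\wedge\ldots\wedge\proj{H}f_m,\varphi_P\rangle$ for every $P$, while Parseval reduces \eqref{eq:second-moment} to computing $\E[\|Z\|^2]=\sum_{|P|=m}\E[|c_P|^2]$.

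The heart of the argument is a closed formula for $c_P$. Writing $\rP_X f_i=\sum_p\alpha_{ip}\varphi_p$, the interpolation conditions $\sum_p\alpha_{ip}\varphi_p(x_k)=f_i(x_k)$ for $1\le k\le n$ determine the $\alpha_{ip}$ by Cramer's rule as ratios of determinants built from $V=(\varphi_p(x_k))_{k,p}$. Expanding $c_P=\det(\alpha_{ip})_{1\le i\le m,\ p\in P}$ by the Cauchy--Binet identity, rewriting the resulting minors of $V^{-1}$ through Jacobi's complementary-minor identity, and recognising a Laplace expansion, I expect to obtain
\[
c_P(X)=\pm\,\frac{(f_1\wedge\ldots\wedge f_m\wedge\varphi_{P^c})(x_1,\ldots,x_n)}{(\varphi_1\wedge\ldots\wedge\varphi_n)(x_1,\ldots,x_n)}\,,
\]
where $P^c=\{1,\ldots,n\}\setminus P$, both numerator and denominator being $n$-fold wedges in the sense of \eqref{eq:defwedge}. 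Setting $\Omega_P:=f_1\wedge\ldots\wedge f_m\wedge\varphi_{P^c}$, the crucial point is that the density \eqref{eq:densitydpp} carries exactly the weight $|(\varphi_1\wedge\ldots\wedge\varphi_n)(X)|^2=|\det V|^2$, which cancels the denominator of $c_P$; all moments then become genuine Andreieff--Heine inner products with no random denominators left to estimate, which also supplies the integrability needed to define $\E[Z]$.

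For \eqref{eq:first-moment}, this cancellation gives $\E[c_P]=\pm\tfrac1{n!}\int\Omega_P(X)\,\overline{(\varphi_1\wedge\ldots\wedge\varphi_n)(X)}\,\d\lambda^{\otimes n}(X)=\pm\langle\Omega_P,\varphi_1\wedge\ldots\wedge\varphi_n\rangle$. By \eqref{eq:AH} this is the determinant of the $n\times n$ Gram matrix pairing the factors $f_1,\ldots,f_m,(\varphi_p)_{p\in P^c}$ against $\varphi_1,\ldots,\varphi_n$. Its last $n-m$ rows, coming from the $\varphi_p$ with $p\in P^c$, are the coordinate vectors $(\delta_{pq})_q$, so the determinant collapses to the $m\times m$ minor $\det(\langle f_i,\varphi_p\rangle)_{1\le i\le m,\ p\in P}$; since $\varphi_p\in H$ one has $\langle f_i,\varphi_p\rangle=\langle\proj{H}f_i,\varphi_p\rangle$, and a final use of \eqref{eq:AH} identifies this with $\langle\proj{H}f_1\wedge\ldots\wedge\proj{H}f_m,\varphi_P\rangle$, proving \eqref{eq:first-moment}.

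For the variance, the same cancellation yields the clean formula $\E[|c_P|^2]=\tfrac1{n!}\int|\Omega_P|^2\,\d\lambda^{\otimes n}=\|\Omega_P\|^2$, hence $\E[\|Z\|^2]=\sum_{|P|=m}\|f_1\wedge\ldots\wedge f_m\wedge\varphi_{P^c}\|^2$. Each term is evaluated by \eqref{eq:AH}: in the Gram matrix of the factors of $\Omega_P$ the block indexed by $P^c$ is the identity, so a Schur-complement computation rewrites $\|\Omega_P\|^2=\|R_P f_1\wedge\ldots\wedge R_P f_m\|^2$, where $R_P=\proj{H_P}+\proj{H^\perp}$ is the orthogonal projection onto $H_P\oplus H^\perp$ with $H_P=\mathrm{span}(\varphi_p:p\in P)$. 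It remains to sum over $P$: expanding each factor $R_P f_i=\proj{H_P}f_i+\proj{H^\perp}f_i$ multilinearly, the terms whose numbers of $H^\perp$-factors differ on the two sides vanish because $H_P\perp H^\perp$, and in the matched terms of index $k$ the only $P$-dependence is carried by the $H_P$-blocks. Summing those over $P$ reduces, via the elementary count $\#\{P:|P|=m,\ Q\subseteq P\}=\sbinom{n-|Q|}{m-|Q|}$ for a fixed $(m-k)$-set $Q$ together with completeness of $(\varphi_Q)_{|Q|=m-k}$ in $\ext^{m-k}H$, to the factor $\sbinom{n-m+k}{k}$ times the $H$-Gram data, which reassemble into $\|{\sf\Pi}_k(f_1\wedge\ldots\wedge f_m)\|^2$. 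This gives $\E[\|Z\|^2]=\sum_{k=0}^m\sbinom{n-m+k}{k}\|{\sf\Pi}_k(f_1\wedge\ldots\wedge f_m)\|^2$, and subtracting $\|\E[Z]\|^2=\|{\sf\Pi}_0(f_1\wedge\ldots\wedge f_m)\|^2$ yields \eqref{eq:second-moment}. I expect the genuine difficulties to be organisational rather than conceptual: pinning down the signs in the Cramer--Jacobi--Laplace derivation of the master formula for $c_P$, and carrying out the summation over $P$ so that the binomial coefficients and the projections ${\sf\Pi}_k$ emerge in exactly the stated form.
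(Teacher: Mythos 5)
Your proposal is correct and follows essentially the same route as the paper: your master formula for $c_P$ is exactly the paper's Cramer identity for minors (Proposition~\ref{prop:cramer}, proved there by the same Cauchy--Binet/Jacobi/Laplace chain you sketch), and the cancellation of $|\det M|^2$ against the random denominator, the Andreieff--Heine and Schur-complement computations, and the $\binom{n-m+k}{k}$ count all coincide with the paper's argument. The only cosmetic differences are that you expand both sides of the inner product when summing over $P$ (the paper keeps $f_1\wedge\ldots\wedge f_m$ fixed on one side, which avoids the cross-term bookkeeping your version requires), and that the paper sidesteps your $\pm$ signs by leaving the replaced columns in place, so its Gram matrix is block-triangular up to conjugation by a permutation matrix.
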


The variance in the second assertion is that of a random element of $L^{2}(S^{m},\frac{1}{m!}\lambda^{\otimes m})$, that is, to be explicit, and in view of the first assertion,
\[{\rm Var}(\rP_{\X} f_1 \wedge \ldots \wedge \rP_{\X} f_m)= \E\Big[\big\lVert \rP_{\X} f_1 \wedge \ldots \wedge \rP_{\X} f_m -\proj{H} f_1\wedge \ldots \wedge \proj{H} f_m\big\rVert^2_{L^{2}(S^{m},\frac{1}{m!}\lambda^{\otimes m})}\Big].\]
Further note that the quadratic identity \eqref{eq:second-moment} may be polarized to obtain information on covariances.

Given the remark made after Lemma~\ref{lem:decomposition}, one can view Theorem~\ref{thm:projection-continuous} as providing a statistical estimator of part of the Euclidean structure of $E$ given $H$ and a realisation $\X$.

When $m=1$, this is the theorem of Ermakov--Zolotukhin \cite{EZ}, rephrased by \cite{Bardenet}:
\[\E[\rP_{\X}f]=\proj{H}f \ \text{ and }\ {\rm Var}(\rP_{\X}f)=n\|\proj{H^{\perp}}f\|^{2}.\]

In order to prove Theorem~\ref{thm:projection-continuous}, we will use the following generalization of Cramer's formula, which surprisingly enough, we have not encountered in our undergraduate linear algebra class.

For all integers $n$ and $m$, we denote by $\set{n}$ the set $\{1,\ldots,n\}$ and by $\Part_{m}(\set{n})$ the set of its subsets with $m$ elements. Given a $p\times q$ matrix $M$ and two subsets $I\subseteq \set{p}$ and $J\subseteq \set{q}$, we define 
\[M^{I}_{J}=(M_{ij})_{i\in I, j\in J} \ \text{ and } \ M^{I}=M^{I}_{\set{q}}.\]

\begin{proposition}[Cramer's identity for minors]\label{prop:cramer}
Let $1\le m\le n$ be two integers. 
Let $M$ be an $n\times n$ invertible square matrix, and $F$ an $n\times m$ rectangular matrix. Let $A$ be the $n\times m$ rectangular matrix solving $MA=F$. 
Then for all $I\in \Part_{m}(\set{n})$, the $m\times m$ submatrix $A^{I}$ has determinant
\begin{equation}
\det A^I = (\det M)^{-1} \det M_{[I\leftarrow F]}\, , 
\end{equation}
where $M_{[I\leftarrow F]}$ is the $n\times n$ square matrix obtained by replacing in $M$ the columns indexed by $I$ by the columns of the matrix $F$.
\end{proposition}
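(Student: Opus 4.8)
The plan is to reduce the statement about the $m\times m$ minor $\det A^I$ to an ordinary determinant computation by exploiting the block structure of the equation $MA=F$. The key observation is that $\det M_{[I\leftarrow F]}$ is, up to the invertible factor $\det M$, exactly the determinant obtained when one expresses the columns of $F$ in terms of the columns of $M$ and the columns of $A$.

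First I would treat the case $m=1$ to fix ideas: there $A$ is a single column, $A^I$ is a single entry $A_{i1}$ for $I=\{i\}$, and the claimed formula $A_{i1}=(\det M)^{-1}\det M_{[\{i\}\leftarrow F]}$ is precisely classical Cramer's rule applied to the linear system $MA=F$. So the proposition is a genuine generalization, and the task is to carry out the bookkeeping for a block of $m$ columns at once. Next, for general $m$, I would write $M_{[I\leftarrow F]}$ as a matrix product. Let $C$ be the $n\times n$ matrix that agrees with the identity except that its columns indexed by $I$ are replaced by the columns of $A$; concretely, if $I=\{i_1<\dots<i_m\}$, the column of $C$ indexed by $i_r$ is the $r$-th column of $A$, and all other columns are the corresponding columns of the identity. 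Because $MA=F$, multiplying $M$ by $C$ replaces exactly the columns of $M$ indexed by $I$ by $MA=F$ while leaving the other columns of $M$ untouched, i.e.\ $MC=M_{[I\leftarrow F]}$. Taking determinants gives $\det M \cdot \det C = \det M_{[I\leftarrow F]}$.

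It then remains to compute $\det C$. This is the step I expect to be the mildly delicate one, though it is purely combinatorial: $C$ is a matrix that is the identity outside the columns $I$, with those columns filled by the $n\times m$ block $A$. Expanding the determinant (e.g.\ by Laplace expansion along the columns indexed by $I$, or by permuting rows and columns so that the $I$-indexed rows and columns form a leading block) shows that all contributions to $\det C$ not supported on the rows indexed by $I$ vanish, because outside those rows the only nonzero entries come from the identity. The surviving term is precisely $\det A^I$, the minor of $A$ on the rows $I$ and all $m$ columns, and the sign bookkeeping (moving the $I$-columns and $I$-rows into leading position produces matching sign factors that cancel) works out so that $\det C=\det A^I$ with no residual sign. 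Combining $\det M\cdot\det A^I=\det M_{[I\leftarrow F]}$ with the invertibility of $M$ yields the stated formula $\det A^I=(\det M)^{-1}\det M_{[I\leftarrow F]}$.

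The main obstacle is thus not conceptual but the sign tracking in evaluating $\det C$; the cleanest route is to observe that $C$ is, after a simultaneous permutation of rows and columns sending $I$ to $\{1,\dots,m\}$, block lower- (or upper-) triangular with diagonal blocks $A^I$ and an identity block, so that $\det C=\det A^I$ immediately and the permutation contributes the same sign to rows and columns. I would present the argument through the factorization $MC=M_{[I\leftarrow F]}$, which makes the whole proof a one-line determinant identity once $\det C=\det A^I$ is established.
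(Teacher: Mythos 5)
Your proof is correct, but it follows a genuinely different route from the paper's. The paper proves the identity by expansion: it writes $A=M^{-1}F$, applies the Cauchy--Binet formula to $\det A^I=\sum_J \det (M^{-1})^I_J\det F^J$, converts each minor of $M^{-1}$ via Jacobi's complementary minor formula, and then recognizes the resulting signed sum as the Laplace expansion of $\det M_{[I\leftarrow F]}$ along the columns indexed by $I$. You instead prove it by factorization: you introduce the matrix $C$ equal to the identity except that its columns indexed by $I$ carry the columns of $A$, observe that $MA=F$ gives exactly $MC=M_{[I\leftarrow F]}$, and reduce everything to $\det C=\det A^I$, which follows since conjugating $C$ by the permutation sending $I$ to $\{1,\dots,m\}$ produces a block-triangular matrix with diagonal blocks $A^I$ and an identity block (the conjugation contributing no net sign). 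Your argument is the direct generalization of the classical textbook proof of Cramer's rule (your $m=1$ case is literally that proof), and it is more elementary and essentially sign-free: all the bookkeeping that the paper handles via Jacobi's formula and ``checking signs'' in the multi-column Laplace expansion is absorbed into the single clean identity $\det(P^{-1}CP)=\det C$. What the paper's route buys in exchange is that it stays within the toolkit used throughout the rest of the note --- Cauchy--Binet and its continuous analogue, the Andreieff--Heine identity, reappear in the proof of the main theorem --- and it exhibits explicitly how $\det A^I$ decomposes as a signed sum of products of minors of $M$ and $F$, which is the form in which such identities are often quoted. Either proof would serve the paper equally well as a lemma.
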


If $I=\{i_{1}<\ldots<i_{m}\}$, then $(M_{[I\leftarrow F]})_{ij}=M_{ij}$ for $j\notin I$, and $(M_{[I\leftarrow F]})_{ij}=F_{ik}$ for $j=i_{k}$.

\begin{proof}
Let us write $A=M^{-1}F$ and use the Cauchy--Binet formula:
\[\det A^I = \sum_{J\in \Part_{m}(\set{n})} \det (M^{-1})^I_{J} \det F^J\,.\]
Now, by Jacobi's complementary minor formula, 
\[\det (M^{-1})^I_{J} = (-1)^{\sum_{i\in I} i+\sum_{j\in J}j}\; (\det M)^{-1} \det M^{J^{c}}_{\, I^{c}}\,.\]
Combining the two previous equations and checking signs, we now recognize the Laplace expansion of $\det M_{[I\leftarrow F]}$ with respect to all columns in $I$:
\[\det A^I =(\det M)^{-1} \sum_{J\in \Part_{m}(\set{n})} (-1)^{\sum_{i\in I} i+\sum_{j\in J}j} \; \det M^{J^{c}}_{I^{c}} \det F^J =(\det M)^{-1} \det M_{[I\leftarrow F]}\,,\]
which concludes the proof.
\end{proof}

\begin{proof}[Proof of Theorem \ref{thm:projection-continuous}]
Let $(\varphi_i)_{1\le i\le n}$ be an orthonormal basis of $H$. Let $X=(x_1, \ldots, x_n)\in S^{n}$ be such that $\det (\varphi_j(x_i)_{1\leq i,j \leq n})\ne 0$. 
Let us introduce the following matrices:
\begin{itemize}
\item \begin{tabular}{p{2.5cm} c} $M=(\varphi_j(x_i))$ & $1\leq i,j \leq n$ \end{tabular},
\item \begin{tabular}{p{2.5cm} c} $F=(f_j(x_i))$ & $1\le i \le n, 1\le j\le m$ \end{tabular},
\item \begin{tabular}{p{2.5cm} c} $A=(\alpha_{ij})$ &  $1\le i\le n, 1\le j\le m$ \end{tabular}, the solution to $MA=F$,
\item \begin{tabular}{p{2.5cm} c} $G=(\langle \varphi_{i},f_{j}\rangle)$ & $1\le i\le n, 1\le j\le m$ \end{tabular}.
\end{itemize}
For each $I=\{i_{1}<\ldots<i_{k}\}\subseteq \{1,\ldots,n\}$, let us write $\varphi_{I}=\varphi_{i_{1}}\wedge \ldots \wedge \varphi_{i_{k}}$.

For each $1\le i\le m$, we have
\[\rP_X f_i = \sum_{k=1}^n A_{ki}\; \varphi_k \ \text{ and } \ \proj{H} f_i = \sum_{k=1}^n G_{ki}\; \varphi_k,\]
so that 
\begin{equation}\label{eq:ph-basis}
\rP_{X} f_1 \wedge \ldots \wedge \rP_{X} f_m=\sum_{I\in \Part_{m}(\set{n})} \det A^{I}\,  \varphi_{I}  \ \text{ and } \  \proj{H}f_1 \wedge \ldots \wedge \proj{H} f_m=\sum_{I\in \Part_{m}(\set{n})} \det G^{I} \, \varphi_{I}.
\end{equation}
In order to prove the first assertion of the theorem, namely \eqref{eq:first-moment}, we are thus left to show that for all $I\in \Part_{m}(\set{n})$, we have
\begin{equation}\label{eq:expectation}
\E\big[\det A^{I}\big]=\det G^{I},
\end{equation}
where we view $A$ as a function of the subset $X\subseteq S$ and the expectation is with respect to $\mu$.

By Proposition~\ref{prop:cramer}, we can write $\det A^I =(\det M)^{-1}\det M_{[I\leftarrow F]}$. Using the form \eqref{eq:densitydpp} of the density of $\mu$ and the Andreieff--Heine identity~\eqref{eq:AH}, we find
\begin{align*}
\E\big[\det A^{I}\big] & = \frac{1}{n!} \int_{S^n} \det A^I \; \vert\det M\vert^2 \; d\lambda^{\otimes n}\\
& = \frac{1}{n!} \int_{S^n} \det M_{[I\leftarrow F]} \; \overline{\det M} \; d\lambda^{\otimes n}\\
& = \det \big(\langle \varphi_a, \psi_{I,b}\rangle\big)_{1\le a,b\le n}\,,
\end{align*}
where $(\psi_{I,1},\ldots,\psi_{I,n})$ is the list $(\varphi_{1},\ldots,\varphi_{n})$ in which the terms labelled by elements of $I$ have been replaced by $f_{1},\ldots,f_{m}$. In symbols, $\psi_{I,b}=\varphi_b$ if $b\notin I$ and $\psi_{I,b}=f_k$ if $I=\{i_{1}<\ldots<i_{m}\}$ and $b=i_{k}$.

The last determinant is, up to conjugation by a permutation matrix, that of a $2\times 2$ block-triangular matrix. One of the diagonal blocks of this matrix is the identity, and the other is $G^{I}$.
Thus, its determinant is equal to $ \det G^{I}$, which proves \eqref{eq:expectation} and thus \eqref{eq:first-moment}.
\medskip

We now turn to the computation of the variance. An important observation is that the family $\{\varphi_{I} : I\in \Part_{m}(\set{n})\}$ is orthonormal in $L^{2}(S^{m},\tfrac{1}{m!}\lambda^{\otimes m})$. Thus, using \eqref{eq:ph-basis}, Pythagoras' theorem, and \eqref{eq:first-moment}, we find that 
\begin{equation}\label{eq:varproof}
{\rm Var}(\rP_{\X} f_1 \wedge \ldots \wedge \rP_{\X} f_m)=\sum_{I\in \Part_{m}(\set{n})} \E\big[(\det A^I)^2\big]- \big\lVert \proj{H} f_1 \wedge\ldots\wedge\proj{H} f_m \big\rVert^2.
\end{equation}

Using the same strategy as before, we compute, for each set $I$ of cardinality $m$,
\begin{align*}
\E\big[\vert\det A^I \vert^2\big] & = \frac{1}{n!} \int_{S^n} \vert \det A^I \vert^2 \; \vert\det M\vert^2 \; d\lambda^{\otimes n}\\
& = \frac{1}{n!} \int_{S^n} \vert \det M_{[I\leftarrow F]} \vert^2 \; d\lambda^{\otimes n}\\
& = \det (\langle \psi_{I,a}, \psi_{I,b}\rangle)_{1\le a,b\le n}.
\end{align*}

The last matrix has a simple block structure coresponding to the partition $\set{n}=I\sqcup I^{c}$, in which the block indexed by $(I^{c},I^{c})$ is the identity. The Schur complement formula thus gives
\begin{align*}
\det (\langle \psi_{I,a}, \psi_{I,b}\rangle)_{1\le a,b\le n}&=\det\big(\langle f_{i},f_{j}\rangle_{1\leq i,j\leq m} - \big(\langle f_{i},\varphi_{b}\rangle\big)_{1\leq i \leq m, b\in I^{c}}\big(\langle \varphi_{a},f_{j}\rangle\big)_{a\in I^{c},1\leq j \leq m}\big)\\
&=\det \left(\langle f_i, ({\rm Id}-\proj{H_{I^{c}}}) f_j \rangle\right)_{1\le i,j \le m}\\
&=\det \big(\langle f_i, (\proj{H^\perp}+\proj{H_{I}}) f_j \rangle\big)_{1\le i,j \le m}\,,
\end{align*}
where for all $J\subseteq \{1,\ldots, n\}$, we set $H_J=\Vect(\varphi_j, j\in J)$.
Using the Andreieff--Heine identity, we rewrite this determinant as
\[\E\big[\vert\det A^I \vert^2\big]=\big\langle f_1\wedge\ldots \wedge f_m, (\proj{H^\perp}+\proj{H_{I}}) f_1\wedge\ldots\wedge (\proj{H^\perp}+\proj{H_{I}})f_m\big\rangle\]
and what we need now is to sum this quantity over all $I\in \Part_{m}(\set{n})$. 

For each $i\in \{1,\ldots,m\}$, let us decompose $f_{i}$ as $f_{i,0}+f_{i,1}+\ldots+f_{i,n}$, where $f_{i,0}=\proj{H^{\perp}}f_{i}$ and for all $j\in \{1,\ldots,n\}$, $f_{i,j}=\langle \varphi_{j},f_{i}\rangle \varphi_{j}$. By multilinearity, we find
\begin{equation*}\label{eq:asommer}
\E\big[\vert\det A^I\vert^2\big]=\sum_{j_{1},\ldots,j_{m}=0}^{n} \big\langle f_1\wedge\ldots \wedge f_m, \underbrace{(\proj{H^\perp}+\proj{H_{I}}) f_{1,j_{1}}\wedge\ldots\wedge (\proj{H^\perp}+\proj{H_{I}})f_{m,j_{m}}}_{R}\big\rangle.
\end{equation*}

Let us call $R$ the function in the right-hand side of the scalar product.
If among the integers $j_{1},\ldots,j_{m}$ two are positive and equal, then $R$ vanishes, and so does the corresponding term of the sum. Let us now assume that the positive indices among $j_{1},\ldots,j_{m}$ are pairwise distinct, and let us list them as $\{l_{1},\ldots,l_{m-k}\}$, where $k=\1_{\{j_{1}=0\}}+\ldots + \1_{\{j_{m}=0\}}$. We make three observations. Firstly, for $R$ not to be zero, it is necessary that $\{l_{1},\ldots,l_{m-k}\}\subseteq I$. Secondly, if this condition is satisfied, then $R=f_{1,j_{1}}\wedge \ldots \wedge f_{m,j_{m}}$, and in particular does not depend on~$I$. Finally, the condition $\{l_{1},\ldots,l_{m-k}\}\subseteq I$ is verified for $\binom{n-m+k}{k}$ subsets $I$ of $\{1,\ldots,n\}$ with $m$ elements.
Putting these observations together, we find
\[\sum_{I\in \Part_{m}(\set{n})} \E\big[\vert\det A^I\vert^2\big]=\sum_{j_{1},\ldots,j_{m}} \sbinom{n-m+k}{k} \big\langle f_1\wedge\ldots \wedge f_m, f_{1,j_{1}}\wedge \ldots \wedge f_{m,j_{m}}\big\rangle.\]
The sum runs over those $j_{1},\ldots,j_{m}$ between $0$ and $n$ among which no two are positive and equal, but lifting this condition only adds null terms to the sum. Therefore, we let $j_{1},\ldots,j_{m}$ run freely between $0$ and $n$, and $k$ is the number of them that are zero.

Let us sort the terms of the last sum according to which of the indices $j_{1},\ldots,j_{m}$ are zero and which are not: calling $B$ the set $\{p\in \set{m} : j_{p}=0\}$ and with the notation $H^{0}=H$ and $H^{1}=H^{\perp}$, this resummation yields
\begin{equation}\label{eq:varproof2}
\sum_{I\in \Part_{m}(\set{n})} \E\big[\vert\det A^I\vert^2\big]=\sum_{k=0}^{m} \sbinom{n-m+k}{k}  \Big\langle f_1\wedge\ldots \wedge f_m, \sum_{B\in \Part_{k}(\set{m})} \proj{H^{\1_{B}(1)}} f_{1}\wedge \ldots \wedge \proj{H^{\1_{B}(m)}} f_{m}\Big\rangle.
\end{equation}
The sum over $B$ yields exactly the function ${\sf \Pi}_{k}(f_{1}\wedge \ldots \wedge f_{m})$. The result follows from the orthogonality of the decomposition \eqref{eq:decL2} and the observation that the term corresponding to $k=0$ is exactly the last term of \eqref{eq:varproof}.
\end{proof}

\medskip

\noindent{\bf Acknowledgments.} We thank the two referees for interesting and helpful comments.  


\def\@rst #1 #2other{#1}
\renewcommand\MR[1]{\relax\ifhmode\unskip\spacefactor3000 \space\fi \MRhref{\expandafter\@rst #1 other}{#1}}
\renewcommand{\MRhref}[2]{\href{http://www.ams.org/mathscinet-getitem?mr=#1}{MR#1}}
\newcommand{\arXiv}[1]{\href{http://arxiv.org/abs/#1}{arXiv:#1}}

\bibliographystyle{hmralphaabbrv}
\bibliography{mean-projection}

\begin{thebibliography}{DWH22}

\bibitem[AB13]{Avron-Boutsidis}
H.~Avron and C.~Boutsidis.
\newblock Faster subset selection for matrices and applications.
\newblock {\em SIAM J. Matrix Anal. Appl.}, 34(4):1464--1499, 2013.
  \MR{3121759}

\bibitem[Ber86]{Berg}
L.~Berg.
\newblock Three results in connection with inverse matrices.
\newblock In {\em Proceedings of the symposium on operator theory ({A}thens,
  1985)}, volume~84, pages 63--77, 1986. \MR{872276}

\bibitem[Bor11]{Borodin}
A.~Borodin.
\newblock Determinantal point processes.
\newblock In {\em The {O}xford handbook of random matrix theory}, pages
  231--249. Oxford Univ. Press, Oxford, 2011. \MR{2932631}

\bibitem[BQ22]{Bufetov-Qiu}
A.~I. Bufetov and Y.~Qiu.
\newblock The {P}atterson-{S}ullivan reconstruction of pluriharmonic functions
  for determinantal point processes on complex hyperbolic spaces.
\newblock {\em Geom. Funct. Anal.}, 32(2):135--192, 2022. \MR{4408430}

\bibitem[BQS21]{Bufetov-Qiu-Shamov}
A.~I. Bufetov, Y.~Qiu, and A.~Shamov.
\newblock Kernels of conditional determinantal measures and the {L}yons-{P}eres
  completeness conjecture.
\newblock {\em J. Eur. Math. Soc. (JEMS)}, 23(5):1477--1519, 2021. \MR{4244512}

\bibitem[BTT90]{BenTal-Teboulle}
A.~Ben-Tal and M.~Teboulle.
\newblock A geometric property of the least squares solution of linear
  equations.
\newblock {\em Linear Algebra Appl.}, 139:165--170, 1990. \MR{1071706}

\bibitem[CCK13]{CCK-line}
M.~J. Catanzaro, V.~Y. Chernyak, and J.~R. Klein.
\newblock On {K}irchhoff's theorems with coefficients in a line bundle.
\newblock {\em Homology Homotopy Appl.}, 15(2):267--280, 2013. \MR{3138380}

\bibitem[DLM20]{Derezinski-Liang-Mahoney}
M.~Derezi\'{n}ski, F.~T. Liang, and M.~W. Mahoney.
\newblock Exact expressions for double descent and implicit regularization via
  surrogate random design.
\newblock In H.~Larochelle, M.~Ranzato, R.~Hadsell, M.~Balcan, and H.~Lin,
  editors, {\em Advances in Neural Information Processing Systems}, volume~33,
  pages 5152--5164. Curran Associates, Inc., 2020.

\bibitem[DM21]{Derezinski}
M.~Derezi\'{n}ski and M.~W. Mahoney.
\newblock Determinantal point processes in randomized numerical linear algebra.
\newblock {\em Notices Amer. Math. Soc.}, 68(1):34--45, 2021. \MR{4202314}

\bibitem[DW18]{Derezinski-Warmuth}
M.~Derezi\'{n}ski and M.~K. Warmuth.
\newblock Reverse iterative volume sampling for linear regression.
\newblock {\em J. Mach. Learn. Res.}, 19:Paper No. 23, 39, 2018. \MR{3862430}

\bibitem[DWH22]{Derezinski-Warmuth-Hsu}
M.~Dereziński, M.~K. Warmuth, and D.~Hsu.
\newblock Unbiased estimators for random design regression.
\newblock {\em Journal of Machine Learning Research}, 23(167):1--46, 2022.

\bibitem[EZ60]{EZ}
S.~M. Ermakov and V.~G. Zolotukhin.
\newblock Polynomial approximations and the {M}onte-{C}arlo method.
\newblock {\em Theory of Probability \& Its Applications}, 5(4):428--431, 1960.
\newblock \href{https://doi.org/10.1137/1105046}{doi.org/10.1137/1105046}.

\bibitem[GBV19]{Bardenet}
G.~Gautier, R.~Bardenet, and M.~Valko.
\newblock On two ways to use determinantal point processes for {M}onte {C}arlo
  integration.
\newblock In {\em Advances in Neural Information Processing Systems},
  volume~32. Curran Associates, Inc., 2019.

\bibitem[Gho15]{Ghosh}
S.~Ghosh.
\newblock Determinantal processes and completeness of random exponentials: the
  critical case.
\newblock {\em Probab. Theory Related Fields}, 163(3-4):643--665, 2015.
  \MR{3418752}

\bibitem[Joh06]{Johansson}
K.~Johansson.
\newblock Random matrices and determinantal processes.
\newblock In {\em Mathematical statistical physics}, pages 1--55. Elsevier B.
  V., Amsterdam, 2006. \MR{2581882}

\bibitem[Kir47]{Kirchhoff}
G.~Kirchhoff.
\newblock Ueber die {A}ufl\"osung der {G}leichungen, auf welche man bei der
  {U}ntersuchung der linearen {V}ertheilung galvanischer {S}tr\"ome gef\"uhrt
  wird.
\newblock {\em Ann.\ Phys.\ und Chem.}, 72(12):497--508, 1847.

\bibitem[KL22]{KL3}
A.~Kassel and T.~L\'{e}vy.
\newblock Determinantal probability measures on {G}rassmannians.
\newblock {\em Ann. Inst. Henri Poincar\'{e} D}, 9(4):659--732, 2022.
  \MR{4525143}

\bibitem[Lyo03]{Lyons-DPP}
R.~Lyons.
\newblock Determinantal probability measures.
\newblock {\em Publ. Math. Inst. Hautes \'Etudes Sci.}, (98):167--212, 2003.
  \MR{2031202}

\bibitem[Lyo14]{Lyons-ICM}
R.~Lyons.
\newblock Determinantal probability: basic properties and conjectures.
\newblock In {\em Proceedings of the {I}nternational {C}ongress of
  {M}athematicians---{S}eoul 2014. {V}ol. {IV}}, pages 137--161. Kyung Moon Sa,
  Seoul, 2014. \MR{3727606}

\bibitem[Mac75]{Macchi}
O.~Macchi.
\newblock The coincidence approach to stochastic point processes.
\newblock {\em Advances in Appl. Probability}, 7:83--122, 1975. \MR{0380979}

\bibitem[Mau76]{Maurer}
S.~B. Maurer.
\newblock Matrix generalizations of some theorems on trees, cycles and cocycles
  in graphs.
\newblock {\em SIAM J. Appl. Math.}, 30(1):143--148, 1976. \MR{392635}

\bibitem[MS17]{Zelda-Suvrit}
Z.~Mariet and S.~Sra.
\newblock Elementary symmetric polynomials for optimal experimental design.
\newblock In {\em Proceedings of the 31st International Conference on Neural
  Information Processing Systems}, NIPS'17, page 2136–2145, Red Hook, NY,
  USA, 2017. Curran Associates Inc.

\bibitem[Sos00]{Soshnikov}
A.~Soshnikov.
\newblock Determinantal random point fields.
\newblock {\em Uspekhi Mat. Nauk}, 55(5(335)):107--160, 2000. \MR{1799012}

\bibitem[ST03]{Shirai-Takahashi}
T.~Shirai and Y.~Takahashi.
\newblock Random point fields associated with certain {F}redholm determinants.
  {I}. {F}ermion, {P}oisson and boson point processes.
\newblock {\em J. Funct. Anal.}, 205(2):414--463, 2003. \MR{2018415}

\end{thebibliography}

\end{document}